\newtheorem{Theorem}{Theorem}[section]
\newtheorem{Proposition}[Theorem]{Proposition}
\newtheorem{Lemma}[Theorem]{Lemma}
\newtheorem{Conjecture}[Theorem]{Conjecture}
\newtheorem*{claim}{Claim}
\theoremstyle{remark}
\newtheorem{Remark}[Theorem]{Remark}
\newcommand*{\fplus}{\genfrac{}{}{0pt}{}{}{+}}
\newcommand*{\fdots}{\genfrac{}{}{0pt}{}{}{\cdots}}
\newcommand*{\fminus}{\genfrac{}{}{0pt}{}{}{-}}
\mathchardef\pFcomma=\mathcode`, % keep a copy of the comma
\newcommand*\pFq[5]{%
  \begingroup
  \begingroup\lccode`~=`,
    \lowercase{\endgroup\def~}{\pFcomma\mkern\pFqskip}%
  \mathcode`,=\string"8000
  {}_{#1}F_{#2}\biggl(\genfrac..{0pt}{}{#3}{#4};#5\biggr)%
  \endgroup
}
\newcommand{\rfac}[2]{{\left({#1}\right)_{#2}}}
\numberwithin{equation}{section}
\dedicatory{Dedicated to the memory of Dick Askey}
\begin{document}
\title[Telescoping Continued fractions]{Telescoping continued fractions for the error term in Stirling's formula}

\author[G.~Bhatnagar]{Gaurav Bhatnagar
}
\address{Ashoka University, Sonipat, Haryana, India}
\email{bhatnagarg@gmail.com}

\author[K.~Rajkumar]{Krishnan Rajkumar}
\address{Jawaharlal Nehru University,
Delhi, India.}
\email{krishnan.rjkmr@gmail.com}

\begin{abstract}
In this paper, we introduce telescoping continued fractions to find lower bounds for the error term $r_n$ in Stirling's approximation
$\displaystyle n! = \sqrt{2\pi}n^{n+1/2}e^{-n}e^{r_n}.$
 This improves lower bounds given earlier by Ces\`{a}ro (1922), Robbins (1955),  Nanjundiah (1959), Maria (1965) and Popov (2017).
The expression is in terms of a  continued fraction, together with an algorithm to find successive terms of this continued fraction. The technique we introduce allows us to experimentally obtain
upper and lower bounds for a sequence of convergents of a continued fraction
 %corresponding to the Legendre polynomials 
 in terms of a difference of two continued fractions. 
%Successive convergents of this continued fraction give upper and lower bounds for $r_n$. Our techniques involve 
%telescoping a continued fraction. 
%Our technique is elementary, and uses a telescoping continued fraction to estimate successive convergents of a continued fraction involving Legendre polynomials. 

\end{abstract}

\keywords{Stirling's approximation, Continued fractions,  Binet's function, 
%Legendre polynomials, 
telescoping}
\subjclass[2010]{Primary: 33C45,  Secondary: 30B70, 11J70}

\maketitle

\section{Introduction}
To prove Stirling's approximation for $n!$, one approach is to 
show 
$$n! = \sqrt{2\pi}n^{n+1/2}e^{-n}e^{r_n},$$
where $r_n$ satisfies an inequality of the form
$$\frac{1}{12n+a(n)} \le r_n \le \frac{1}{12n}.$$
Stromberg \cite[p.~253]{Stromberg1981} outlines a proof, and says Ces\`{a}ro (1922) obtained the estimate $a(n)=1/4$.  Robbins' \cite{Robbins1955} gave $a(n)=1$ and was most influential; he takes $\epsilon_p$, for integer $p \geq 1$, such that 
$$r_n = \sum_{p\ge n} \epsilon_p.$$
Here 
\begin{align}\label{first}
\epsilon_p = \frac{2p+1}{2} \log \left(\frac{p+1}{p}\right) -1 
			=\frac{1}{2y} \log \left(\frac{1+y}{1-y}\right) -1,
\end{align}
where $y = 1/(2p+1)$.
The key idea of Robbins' proof is to use the 
Taylor expansion of \eqref{first} about $y=0$, to conclude that
\begin{equation}\label{robbins}
\frac{1}{12} \left( \frac{1}{p+\frac{1}{12}}-\frac{1}{p+1+\frac{1}{12}}\right) < \epsilon_p < 
\frac{1}{12} \left( \frac{1}{p}-\frac{1}{p+1}\right).
\end{equation}
Both bounds are of the form $g(p)-g(p+1).$ 
The estimates for $r_n$ follow by telescoping. 

Robbins' proof has motivated further enhancements, in particular by Maria~\cite{Maria1965},
Nanjundiah \cite{Nanjundiah1959}, Hirschhorn and Villarino~\cite{HV2014}, and Popov~\cite{Popov2018}. All these authors work with the Taylor expansion of \eqref{first}, just like Robbins. Dominici~\cite{Dominici2008} has written an interesting history of approaches to prove Stirling's approximation.

We remark that finding upper and lower bounds for $r_n$ are not the main item of interest here. 
Indeed, $r_n$ (also called the Binet function and denoted by $J(n)$)
%{\color{red} To be placed before talking about the history of lower bounds}
%We note that
 %$r_n=J(n)$ 
 possesses the asymptotic expansion 
\begin{equation}\label{stirling}
	%J(n)
	r_n \sim \sum_{i=1}^{\infty} \frac{B_{2i}}{(2i-1)2i}n^{-2i+1},
\end{equation}
called the Stirling series in the literature. Explicit integral expressions are known for the remainder term for any finite truncation of \eqref{stirling} and the sign of this term is the same as that of the first neglected term in the asymptotic expansion. Thus the finite truncations of \eqref{stirling} of length $N$ give upper and lower bounds for $r_n$ depending on the parity of $N$. However, these bounds diverge rapidly, and the above cited authors have provided bounds which work for all positive integers $n$. 

The goal of this paper is to take Robbins' proof in a different direction.  What is different is that we use the continued fraction \eqref{cf1}, rather than the Taylor expansion of \eqref{first}. 
%(Gauss used the same continued fraction for his work on quadrature; see Andrews, Askey and Roy~\cite[p.~240]{AAR1999}.) 
The convergents of this continued fraction can be written in terms of the Legendre polynomials and the associated (numerator) polynomials.  We find a sequence of lower and upper estimates of the
form $g_m(p)-g_m(p+1)$
for these convergents. By telescoping we obtain lower bounds for $r_n$ which better the bounds found by the above mentioned authors.

%which lead to estimates for $r_n$ by telescoping.
 %We are not the first to use this continued fraction; 
%we have it on good authority~\cite[p.~240]{AAR1999} that 

Moreover, our function $g_m(p)$ is a continued fraction of the form
\begin{equation}\label{gm}
g_m(p)=
\frac{a_1}{p}\fplus\frac{a_2}{p}\fplus\frac{a_3}{p}\fplus\fdots\fplus\frac{a_m}{p}.
\end{equation}
Thus, we call this approach the method of telescoping continued fractions. (The idea of telescoping to discover continued fractions is implicit in \cite{Rajkumar2012}.)

%The germ of 
%this idea of telescoping continued fractions is implicit in \cite{Rajkumar2012}, where the
%second named author had explained the connection of a continued fraction of Ramanujan with 
%Ap\'{e}ry's proof that $\zeta(3)$ is irrational. 

%By following this path, we expected to find upper and lower bounds for $r_n$. To our surprise, we found 
%%$$g_k(p)=
%%\frac{a_1}{p}\fplus\frac{a_2}{p}\fplus\frac{a_3}{p}\fplus\fdots\fplus\frac{a_{k-1}}{p}\fplus\frac{a_k}{p}.
%%$$
%that these terms match a continued fraction mentioned by Jones and Thron \cite[p.~351]{JT1980}, which is obtained from the asymptotic expansion  given in 
%\eqref{stirling} by using the qd-algorithm. 

This paper is organized as follows. We provide an overview of our technique in \S\ref{sec:overview}. In \S\ref{sec:lower-bounds}, we apply this technique and obtain lower bounds for the error function. Next, in \S\ref{sec:summary-calc} we outline what is required to make this approach work, and prove the main algorithm in \S\ref{sec:algo}. 
%(the main algorithm), \S\ref{sec:am-positive} ($a_m$ are positive) and \S\ref{sec:persistence} (a stability theorem for $a_m$). 
In \S\ref{sec:conjectures} we discuss the results and 
and note some conjectures related to our findings. 
We conclude with the motivation of this paper---a Ramanujan story.

\section{Overview of the technique}\label{sec:overview}
We wish to estimate 
$$\epsilon_p = \frac{2p+1}{2} \log \left(\frac{p+1}{p}\right) -1.$$
We use the continued fraction
\begin{equation}\label{cf1} 
\frac{1}{2y} \log \left(\frac{1+y}{1-y}\right)= \frac{1}{1}\fminus \frac{y^2}{3}\fminus \frac{4y^2}{5}\fminus \frac{9y^2}{7}\fminus \fdots,
\end{equation}
where $y=1/(2p+1)$, or equivalently,
\begin{equation}\label{cf}
\frac{1}{2}\log \left(\frac{p+1}{p}\right)= \frac{1}{2p+1}\fminus \frac{1}{6p+3}\fminus \frac{4}{10p+5}\fminus \frac{9}{14p+7}\fminus \fdots.
\end{equation}

Let  $$p_k(x)=x\frac{P_k^*(x)}{P_k(x)}-1,$$ 
where $P_n(x)$ and $P^*_n(x)$ are the Legendre Polynomials and their associated numerator polynomials. Our approach consists of the following steps.
\begin{description}
\item[Step 1] We show that $p_k(2p+1)$ is an increasing sequence of approximations to $\epsilon_p$. 
\item[Step 2] Let $g_m(p)$ be given by \eqref{gm}. For fixed $k$, we solve a series of inequalities to find, in turn, $a_1$, $a_2$, $\dots$, $a_m$, so that 
 upper and lower estimates for
$p_k(2p+1)$ are provided by alternate terms of 
$g_1(p)-g_1(p+1)$, $g_2(p)-g_2(p+1)$, $\dots$, 
$g_m(p)-g_m(p+1)$.
\end{description}
By Step 1, it follows that lower bounds for $p_k(2p+1)$ are also lower bounds for 
$\epsilon_p$. Thus, by summing over $p\ge n$, we find lower bounds for 
 $r_n$ of the form
\begin{equation}\label{rn_upper}
r_n> \frac{a_1}{n}\fplus\frac{a_2}{n}\fplus\fdots\fplus\frac{a_m}{n}, 
\end{equation}
for $m$ an even number. (Upper bounds will be discussed later in $\S$\ref{sec:summary-calc}) 

We first prove our claims in Step 1. The convergents of this continued fraction form an increasing sequence. This is true of any continued fraction of the form
$$\frac{a_1}{b_1}\fminus \frac{a_2}{b_2}\fminus \frac{a_3}{b_3}\fminus\fdots, $$    
where the $(a_k)$ and $(b_k)$ are positive. 
%This
%follows 
%by using  
%$$\frac{a_1}{b_1}  > \frac{a_1}{b_1}\fminus \frac{a_2}{b_2} $$    
%and induction. 
Next, we show that the convergents of the continued fraction in \eqref{cf}  are given by
${P^*_n(2p+1)}/{P_n(2p+1)}$. This is a consequence of the following Proposition.
%Here $P_n(p)=L_n(2p+1)$ where $L_n$ denote the (original) Legendre polynomials. 
\begin{Proposition}\label{prop:1} Consider the continued fraction.
\begin{equation*}%\label{jfrac1}
\frac{1}{x}\fminus\frac{1^2}{3x}\fminus\frac{2^2}{5x}\fminus\frac{3^2}{7x}\fminus\fdots.
\end{equation*}
The $k$th convergent of this continued fraction is given by $P_k^*(x)/P_k(x)$. The sequence 
$P_k^*(x)/P_k(x)$ converges to the continued fraction for $x\not\in [-1,1]$. 
\end{Proposition}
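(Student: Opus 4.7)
The plan is to identify $P_k^*(x)/P_k(x)$ as the $k$th convergent via the Euler--Wallis three-term recurrences for the numerators and denominators of the continued fraction, and then invoke Markov's theorem (or, equivalently, the classical Gauss continued fraction) for the convergence statement.

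First I would let $A_k/B_k$ denote the $k$th convergent of the given continued fraction, so that the Euler--Wallis relations yield, for $k\geq 2$,
$$A_k = (2k-1)x\,A_{k-1} - (k-1)^2 A_{k-2},\qquad B_k = (2k-1)x\,B_{k-1} - (k-1)^2 B_{k-2},$$
with $A_0=0$, $A_1=1$, $B_0=1$, $B_1=x$. Next, Bonnet's recurrence $nP_n(x)=(2n-1)x P_{n-1}(x)-(n-1)P_{n-2}(x)$ for the Legendre polynomials, after multiplying through by $(n-1)!$, becomes
$$n!\,P_n(x) = (2n-1)x\,(n-1)!\,P_{n-1}(x) - (n-1)^2\,(n-2)!\,P_{n-2}(x),$$
which is exactly the recurrence satisfied by $B_k$. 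Since the initial values $k!\,P_k(x)$ at $k=0,1$ match $B_0=1$ and $B_1=x$, induction gives $B_k=k!\,P_k(x)$. The associated polynomials $P_k^*(x)$ obey the same three-term recurrence with initial data $P_0^*=0$, $P_1^*=1$, and the parallel computation yields $A_k=k!\,P_k^*(x)$. Hence $A_k/B_k = P_k^*(x)/P_k(x)$ as claimed.

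For convergence on $\mathbb{C}\setminus[-1,1]$, I would invoke Markov's theorem for orthogonal polynomials: the ratios $P_k^*(x)/P_k(x)$ (associated to an orthogonal polynomial sequence for a positive measure on a compact interval) converge uniformly on compact subsets off the support to the Cauchy transform of the measure. For the Legendre polynomials with normalized weight $\tfrac12$ on $[-1,1]$, this Cauchy transform equals $\tfrac12\log\tfrac{x+1}{x-1}$, which is the value of the continued fraction (equivalently, via $y=1/x$, the right-hand side of \eqref{cf1}). An alternative route is to recognize the continued fraction as the Gauss ${}_2F_1$ continued fraction for $\tfrac{1}{2y}\log\tfrac{1+y}{1-y}$, whose convergence region is classical.

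The only real obstacle is notational bookkeeping: different sources normalize $P_k^*(x)$ differently, so I would begin by fixing the convention $P_0^*=0$, $P_1^*=1$ in order to match the initial data $A_0=0$, $A_1=1$ forced by the Euler--Wallis recurrence. Once these conventions are aligned, the matching of recurrences is immediate and the convergence statement follows from classical results on orthogonal polynomial ratios.
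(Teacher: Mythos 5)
Your proposal is correct and follows essentially the same route as the paper: identify the numerators and denominators of the convergents via the Euler--Wallis three-term recurrence, rescale by $k!$ to match Bonnet's recurrence for the Legendre polynomials and their associated numerator polynomials, and invoke Markov's theorem for convergence off $[-1,1]$. The only difference is cosmetic — you derive the recurrence match by hand where the paper cites Ismail's Theorem 2.6.1, and you additionally identify the limit as the Cauchy transform $\tfrac12\log\tfrac{x+1}{x-1}$, which the paper leaves implicit.
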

\begin{proof}[Outline of proof]
Let the  $k$th convergent of the continued fraction be given by ${N_k(x)}/{D_k(x)}$.
%$$\frac{N_k(x)}{D_k(x)}
%= \frac{1}{x}\fminus\frac{1^2}{3x}\fminus\frac{2^2}{5x}\fminus\frac{3^2}{7x}\fminus\fdots\fminus
%\frac{(k-1)^2}
%{(2k-1)x}.
%$$
%Then by Theorem~2.6.1 of Ismail~\cite[p.\ 35]{MI2009}, $N_k(x)$ and $D_k(x)$ satisfy the recurrence relation
%\begin{equation*} \label{three-term}
%y_{k+1}(x) = (2k+1)x y_k(x) - k^2 y_{k-1}(x), \text{ for } k>0,
%\end{equation*}
%with the initial values 
%$D_0(x)=1, D_1(x)=x, N_0(x)=0, N_1(x)=1.$
Let
$$P^*_k(x) := \frac{N_k(x)}{k!} \text{ and } P_k(x) := \frac{D_k(x)}{k!}.$$
We refer to Theorem~2.6.1 of Ismail~\cite[p.\ 35]{MI2009} to see that 
$P^*_k(x)$ and $P_k(x)$ satisfy the recurrence relation
\begin{equation} \label{three-term-legendre}
y_{k+1}(x) = \frac{2k+1}{k+1} x y_k(x) - \frac{k}{k+1} y_{k-1}(x), \text{ for } k>0,
\end{equation}
with the initial values 
$P_0(x)=1, P_1(x)=x, P^*_0(x)=0, P^*_1(x)=1.$
%Note the initial values $P_0(x)=1, P_1(x)=x, P^*_0(x)=0, P^*_1(x)=1.$
 Thus  $P_k(x)$ and $P^*_k(x)$ are the Legendre polynomials and the associated numerator polynomials (see Chihara \cite[p.\ 201]{Chihara1978}).
The convergence follows from the theory of orthogonal polynomials, see \cite[Th.~2.6.2, p.~36]{MI2009}. The convergents converge to the continued fraction for all  $x\in \mathbb{C}$, where $x\not\in [-1,1]$.
\end{proof}

\begin{comment}
%Earlier version
\begin{proof}
Let the  $k$th convergent of the continued fraction be given by ${N_k(x)}/{D_k(x)}$.
%$$\frac{N_k(x)}{D_k(x)}
%= \frac{1}{x}\fminus\frac{1^2}{3x}\fminus\frac{2^2}{5x}\fminus\frac{3^2}{7x}\fminus\fdots\fminus
%\frac{(k-1)^2}
%{(2k-1)x}.
%$$
Then by Theorem~2.6.1 of Ismail~\cite[p.\ 35]{MI2009}, $N_k(x)$ and $D_k(x)$ satisfy the recurrence relation
\begin{equation*} \label{three-term}
y_{k+1}(x) = (2k+1)x y_k(x) - k^2 y_{k-1}(x), \text{ for } k>0,
\end{equation*}
with the initial values 
$D_0(x)=1, D_1(x)=x, N_0(x)=0, N_1(x)=1.$
Let
$$P^*_k(x) := \frac{N_k(x)}{k!} \text{ and } P_k(x) := \frac{D_k(x)}{k!}.$$
Then we see that 
$P^*_k(x)$ and $P_k(x)$ satisfy the recurrence relation
\begin{equation} \label{three-term-legendre}
y_{k+1}(x) = \frac{2k+1}{k+1} x y_k(x) - \frac{k}{k+1} y_{k-1}(x), \text{ for } k>0,
\end{equation}
with the initial values 
$P_0(x)=1, P_1(x)=x, P^*_0(x)=0, P^*_1(x)=1.$
From this recurrence relation and initial values, we recognize $P_k(x)$ and $P^*_k(x)$ as the Legendre polynomials and the associated numerator polynomials (see Chihara \cite[p.\ 201]{Chihara1978}).

The convergence follows from the theory of orthogonal polynomials, see \cite[Th.~2.6.2, p.~36]{MI2009}. The convergents converge to the continued fraction for all $x\in \mathbb{C}$ not in the true interval of orthogonality of the continued fraction. The zeros of the Legendre polynomials all lie in $[-1,1]$; thus the continued fraction converges in $x\not\in [-1,1]$. 
\end{proof}
\end{comment}

Before proceeding with more details of Step 2, note one interesting property that helps to simplify the computations. It turns out that both sides of the inequalities we wish to solve are functions of $z=p(p+1)$. First we consider $p_k(x)$. 

\begin{Proposition}\label{prop:2} Let $p_k(x)$ be as defined above. Let $z=p(p+1)$.Then
for all $p$ and $k\geq 1$, $p_k(2p+1)$ is a rational function of $z$.
\end{Proposition}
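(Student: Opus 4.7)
The plan is to exploit a parity (even/odd) structure of $P_k(x)$ and $P_k^*(x)$ inherited from the three-term recurrence, show that $x\,P_k^*(x)/P_k(x)$ is an \emph{even} rational function of $x$, and then observe that $x^2 = (2p+1)^2 = 4p(p+1)+1 = 4z+1$ is an affine function of $z$, so any rational function of $x^2$ is a rational function of $z$.

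More concretely, I would first establish by induction on $k$ that $P_k(-x) = (-1)^k P_k(x)$ and $P_k^*(-x) = (-1)^{k-1} P_k^*(x)$. The base cases follow from the initial data $P_0 = 1$, $P_1(x) = x$, $P_0^* = 0$, $P_1^*(x) = 1$. For the induction step, the recurrence \eqref{three-term-legendre} expresses $P_{k+1}$ (respectively $P_{k+1}^*$) as $\frac{2k+1}{k+1}\,x\,P_k - \frac{k}{k+1}\,P_{k-1}$, and both terms on the right have the same parity, which is opposite that of $P_k$ and equal to that of $P_{k-1}$, yielding the claim.

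From the parity statement, $P_k(x)$ is a polynomial in $x^2$ when $k$ is even and equals $x$ times a polynomial in $x^2$ when $k$ is odd; similarly $x\,P_k^*(x)$ is a polynomial in $x^2$ when $k$ is even and $x$ times a polynomial in $x^2$ when $k$ is odd. In either parity case the ratio
\[
\frac{x\,P_k^*(x)}{P_k(x)}
\]
is a quotient of two polynomials in $x^2$, hence a rational function of $x^2$. Consequently $p_k(x) = x\,P_k^*(x)/P_k(x) - 1$ is a rational function of $x^2$.

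Finally, setting $x = 2p+1$ gives $x^2 = 4p(p+1)+1 = 4z+1$, so substituting into the rational expression above produces a rational function of $z$. The only thing that could conceivably go wrong is division by a factor of $P_k(2p+1)$ that is not expressible through $z$, but the parity argument guarantees $P_k(2p+1)$ is itself (up to a possible factor of $x$ that cancels with $x\,P_k^*(x)$) a polynomial in $4z+1$; this cancellation of any stray $x$ between numerator and denominator is the one bookkeeping point that requires care, and it is handled cleanly by splitting into the even-$k$ and odd-$k$ cases.
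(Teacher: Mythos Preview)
Your proof is correct and follows essentially the same approach as the paper: establish the parity relations $P_k(-x)=(-1)^k P_k(x)$ and $P_k^*(-x)=(-1)^{k-1}P_k^*(x)$ by induction from the three-term recurrence, conclude that $p_k(x)$ is an even rational function hence a rational function of $x^2$, and then use $(2p+1)^2=4z+1$. Your write-up is in fact more detailed than the paper's, which simply asserts the parity relations and the evenness of $p_k(x)$ without the explicit even/odd case split you carry out.
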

\begin{proof} 
First note that for $k>1$, $p_k(x)$ is an even function. This follows from
$$
P_k^*(-x) =(-1)^{k-1}P_k^*(x) \text{ and }
P_k(-x) =(-1)^{k}P_k(x).
$$
These are easy to prove by induction using the recurrence relation \eqref{three-term-legendre}.
Now, since $p_k(x)$ is an even rational function, it must be a rational function in $x^2$. Note that $(2p+1)^2=4p(p+1)+1$. Thus $p_k(2p+1)$ is a function of $z=p(p+1)$. 
\end{proof}

Let $g_m(p)={n_m(p)}/{d_m(p)}$. We use the simplified notation $n_j:= n_j(p),d_j:=d_j(p), n_j^+:= n_j(p+1),d_j^+:= d_j(p+1).$ %With this notation, we have the following proposition.
\begin{Lemma}\label{prop:hz} %Let $g_m(p)$ be as defined above. 
Let $z=p(p+1)$.Then
for all $m$ and $p$, $g_m(p)-g_m(p+1)$ is a rational function of $z$. That is:
\begin{enumerate}
\item[(a)] the numerator of $g_m(p)-g_m(p+1)$ given by $n_m d_m^+ - n_m^+ d_m$ is
a polynomial in $z$; and,
\item[(b)] the denominator $d_m d_m^+$ is
a polynomial in $z$.
\end{enumerate}
\end{Lemma}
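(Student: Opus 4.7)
The plan is to exploit the involution $\sigma\colon p\mapsto -1-p$, which fixes $z=p(p+1)$ and sends $p+1$ to $-p$. Once both the numerator and denominator in the statement are shown to be $\sigma$-invariant polynomials in $p$, the conclusion follows from the fact that the subfield of $\mathbb{Q}(p)$ fixed by $\sigma$ is exactly $\mathbb{Q}(z)$.

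First I would establish parity relations for the convergents of the continued fraction \eqref{gm}. The standard Wallis recurrence gives
\begin{equation*}
n_m = p\, n_{m-1} + a_m\, n_{m-2},\qquad d_m = p\, d_{m-1} + a_m\, d_{m-2},
\end{equation*}
with $n_0=0$, $n_1=a_1$, $d_0=1$, $d_1=p$. A short induction on $m$ then yields
\begin{equation*}
n_m(-p)=(-1)^{m-1}n_m(p),\qquad d_m(-p)=(-1)^{m}d_m(p),
\end{equation*}
which is the same parity phenomenon already used in the proof of Proposition \ref{prop:2}.

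Next I would apply $\sigma$ directly to the two polynomials in the lemma. Since $\sigma$ swaps $p$ and $p+1$ up to sign, the parity relations give
\begin{equation*}
d_m(-1-p)\,d_m(-p) = (-1)^m d_m(p+1)\cdot (-1)^m d_m(p) = d_m\,d_m^+,
\end{equation*}
so $d_m d_m^+$ is $\sigma$-invariant, proving (b). The same substitution turns the numerator $n_m d_m^+ - n_m^+ d_m$ into $(-1)^{2m-1}\bigl(n_m^+ d_m - n_m d_m^+\bigr) = n_m d_m^+ - n_m^+ d_m$, so (a) is also $\sigma$-invariant.

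To finish, I would invoke that every polynomial in $p$ fixed by $\sigma$ is a polynomial in $z$: since $p$ satisfies $p^2+p=z$, any polynomial in $p$ can be written uniquely as $A(z)+pB(z)$ with $A,B\in\mathbb{Q}[z]$, and $\sigma$-invariance forces $(2p+1)B(z)=0$, hence $B\equiv 0$. I do not anticipate any serious obstacle; the only delicate point is that the lemma asks for the numerator and denominator \emph{separately} to be polynomials in $z$, not merely the ratio to be a rational function of $z$, and it is precisely the parity step that delivers this strengthening.
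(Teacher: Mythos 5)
Your argument is correct, but it takes a genuinely different route from the paper. The paper proves the lemma by induction on $m$ using the Wallis recurrence $u_k = p\,u_{k-1}+a_k u_{k-2}$ directly on the products: it expands $d_{k+1}d_{k+1}^+$ in terms of $d_kd_k^+$, $d_{k-1}d_{k-1}^+$ and the mixed combination $p\,d_{k-1}^+d_k+(p+1)d_{k-1}d_k^+$, which in turn forces it to track the further auxiliary quantity $d_{m-1}^+d_m-d_{m-1}d_m^+$, and closes the induction on all three families simultaneously (part (a) being analogous). You instead observe that the involution $\sigma\colon p\mapsto -1-p$ fixes $z$ and interchanges the arguments $p$ and $p+1$ up to sign, derive the parity laws $n_m(-p)=(-1)^{m-1}n_m(p)$, $d_m(-p)=(-1)^m d_m(p)$ by a one-line induction, check that $d_md_m^+$ and $n_md_m^+-n_m^+d_m$ are $\sigma$-invariant, and conclude via the decomposition $\mathbb{Q}[p]=\mathbb{Q}[z]\oplus p\,\mathbb{Q}[z]$. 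This is essentially the same symmetry idea the paper uses for Proposition~\ref{prop:2} (evenness of $p_k(x)$), transplanted to the shifted setting, and it is shorter and yields (a) and (b) in one stroke; it also generalizes immediately to any expression in $n_m,d_m,n_m^+,d_m^+$ that is $\sigma$-invariant. What the paper's more computational induction buys in exchange is the explicit recursive structure on the three auxiliary polynomials, which prefigures the quantities $\Delta_2(k,m)$ and $\Delta_3(k,m)$ and the recurrences of Lemma~\ref{lemma:delta} that drive the algorithm in \S\ref{sec:algo}. Your only loose phrasing is the appeal to the fixed subfield of $\mathbb{Q}(p)$; what you actually need (and correctly supply) is the polynomial statement that a $\sigma$-invariant element of $\mathbb{Q}[p]$ lies in $\mathbb{Q}[z]$, with coefficients allowed to involve $a_1,\dots,a_m$, which changes nothing since the argument works over any integral domain containing them.
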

\begin{proof} First note that $n_k$, and $d_k$ satisfy the recurrence 
\begin{equation}\label{g-n-d-recursion}
u_k=p u_{k-1}+a_k u_{k-2}, \text{ for } k\geq 1,
\end{equation}
with initial values
$n_{-1}=1,d_{-1}=0,n_0=0,d_0=1.$

We first prove Part (b) of the lemma by induction. Suppose (b) is true for $m=k$. Since $d_k^+$ also satisfies 
\eqref{g-n-d-recursion} (with $p\mapsto p+1$), $d_{k+1}d_{k+1}^+$ can be written as
\begin{equation*}
%\begin{align*}\label{denom}
	d_{k+1}d_{k+1}^+ = p(p+1) d_k d_k^+ + a_{k+1}^2 d_{k-1} d_{k-1}^+
	+ a_{k+1} \left( p d_{k-1}^+ d_k  +(p+1) d_{k-1} d_k^+  \right).
\end{equation*}
Thus to show that
\begin{subequations}
\begin{equation}\label{denom1}
d_m d_m^+
\end{equation}
is a polynomial in $z$, we need to show that 
\begin{equation}\label{denom2}
pd_{m-1}^+  d_m  +(p+1) d_{m-1} d_m^+ 
\end{equation}
is also a polynomial in $z$. Again, to use induction, we will need to consider when $m=k+1$. For this we need to show that
\begin{equation}\label{denom3}
d_{m-1}^+ d_m  - d_{m-1} d_m^+ 
\end{equation}
too is a polynomial in $z$. 

% But
%\begin{align*}
%p d_k^+ d_{k+1}  + (p+1) d_k d_{k+1}^+  &= (2p^2+2p+1) d_k d_k^+ \\
%&+ a_{k+1} \left(p d_{k-1}^+ d_k+(p+1) d_{k-1} d_{k}^+  \right) \\
%&+ a_{k+1}\left(  d_{k-1}^+ d_k - d_{k-1} d_k^+ \right),
%\end{align*}
%so we need to show that
%\begin{equation}\label{denom3}
%d_{m-1}^+ d_m  - d_{m-1} d_m^+ 
%\end{equation}
%too is a polynomial in $z$. 
\end{subequations}
%But
%\begin{align*}
%d_k^+ d_{k+1}   - d_k d_{k+1}^+   &= - d_k d_k^+ + a_{k+1}\left(d_k^+ d_{k-1} - d_k d_{k-1}^+ \right).
%\end{align*}
We can now complete the proof of part (b). All of \eqref{denom1}, \eqref{denom2} and \eqref{denom3}
are polynomials of degree zero when  $m=0$. For $m>1$, the result follows by induction.
%
%
%Suppose they are polynomials for $m\le k$. Then \eqref{denom1} is a polynomial in $z$ for $k+1$ because \eqref{denom1} and \eqref{denom2} are polynomials in $z$ for $m=k$. Similarly, \eqref{denom2} is
%a polynomial in $z$ for $m=k+1$ since \eqref{denom1}, \eqref{denom2} and \eqref{denom3} are polynomials in $z$. And finally, \eqref{denom3} is a polynomial in $z$ because \eqref{denom1} and \eqref{denom3} are polynomials in $z$. 
%
The proof of part (a) is similar and is omitted.
\end{proof}

We are now ready to apply the technique and calculate the $a_m$ for small values of $k$. Even these will be enough to improve on previously known lower bounds. 

\section{Some lower bounds for the error function}\label{sec:lower-bounds}
In this section we use our technique to improve known lower bounds for $r_n$ given by Maria~\cite{Maria1965},
Nanjundiah \cite{Nanjundiah1959}, and Popov~\cite{Popov2018}, in addition to the aforementioned bounds due to Ces\`{a}ro and Robbins. 
%More importantly, the computations will motivate a general algorithm for computing both lower and upper bounds. 
We now write $p_k(2p+1)$ as functions of $z$ using the notation ${f_k^*(z)}/{f_k(z)}$ (see Proposition~\ref{prop:2}). We proceed with the calculations indicated in Step 2 of \S\ref{sec:overview} for $k=2, 3, 4, 5$. 

%The first few $p_k$ are given in Table~\ref{table1}.
%\begin{table}[h]
%$$
%\begin{array}{c | c}
%k & {f_k^*(z)}/{f_k(z)} \\
%\hline\hline
%2 &  \frac{1}{12z + 2} \\ \hline
%3 &  \frac{5}{60z + 6} \\ \hline
%4 & \frac{70z + 13}{840z^2 + 240z + 12} \\ \hline
%5 & \frac{630z + 77}{ 7560z^2 + 1680z + 60} \\ \hline
%6 & \frac{1540z^2 + 476z + 29}{18480z^3 + 7560z^2 + 840z + 20} \\ \hline
%7 & \frac{20020z^2 + 4928z + 223}{240240z^3 + 83160z^2 + 7560z + 140} \\
%\hline\hline
%\end{array}
%$$
%\caption{$p_k(2p+1) = {f_k^*(z)}/{f_k(z)}$, where $z=p(p+1)$}
%\label{table1}
%\end{table}

\subsection*{Estimates with $k=2$}
Let $k=2$. We need to find $a_1$ such that
$$\frac{f_2^*(z)}{f_2(z)} =  \frac{1}{12z + 2} <  g_1(p)-g_1(p+1)=\frac{a_1}{z}.$$
Clearing denominators, we see that we require $a_1$ such that
$(1-12a_1)z-2a_1< 0.$
The choice of $a_1=1/12$ works; the left hand side is negative for all values of $z\ge 1$. 
The next inequality is
\begin{multline*}
\frac{f_2^*(z)}{f_2(z)} > g_2(p)-g_2(p+1) = 
\frac{1/12}{p}\fplus\frac{a_2}{p}- \frac{1/12}{p+1}\fplus\frac{a_2}{p+1}
=
\frac{(1/12)(z-a_2) }{z^2+2a_2z +a_2^2 +  a_2},
\end{multline*}
where we have replaced $a_1$ by $1/12$. 
 Again, upon clearing the denominators, 
we see that we need to choose $a_2$ such that
%We require
%$$ \frac{1}{12z + 2} \ge g_2(p)-g_2(p+1)=\frac{a_1}{z}.$$
%
$\big(3a_2-{1}/{6}\big)z + a_2^2 +  {7a_2}/{6} > 0.$
The choice of $a_2=1/18$ makes sure the expression is positive for all values of $z$. 

So far we have obtained the lower bound 
\begin{equation}\label{lb1}
r_n> \frac{1/12}{n}\fplus\frac{1/18}{n}.
\end{equation}
This lower bound is already better than 
Robbins' bound
$$r_n > \frac{1/12}{n}\fplus\frac{1}{12},$$
and, for $n>3$, Ces\`{a}ro's bound
$$r_n > \frac{1/12}{n}\fplus\frac{1}{48}.$$
Further, for $n>4$, the bound \eqref{lb1} improves Maria's~\cite{Maria1965} lower bound
$$r_n > \frac{1/12}{n}\fplus\frac{1}{16n+8}.$$

\subsection*{Estimates with $k=3$}
The next convergent is given by
$$ \frac{f_3^*(z)}{f_3(z)}  = \frac{5}{60z + 6}.$$
We have to solve the inequality
$$\frac{f_3^*(z)}{f_3(z)} \;?\; g_m(p)-g_m(p+1)$$
where $?$ represents $<$ (or $>$) depending on whether $m$ is odd (respectively, even). As before, let us clear denominators and bring all expressions to the left. Let $\Delta_1(3, m)$ denote the result obtained by doing so. We assume that at each step we have substituted the values of $a_1$, $a_2$, $\dots$, $a_{m-1}$ found previously. 

%
%We use  $h_m(z)$ and $h^*_m(z)$ to denote 
%$$\frac{h^*_m(z)}{h_m(z)} = g_m(p)-g_m(p+1),$$
%where  we have suppressed the dependence on $a_1, a_2, \dots, a_m$. Then
%$$f_3^*(z)h_m(z)-h_m^*(z)f_3(z)$$
%represents the left hand side of the inequality after we clear the denominators. 
%As before, at each step we substitute the value of $a_1$, $a_2$, $\dots$, $a_{m-1}$ found previously. 

The calculated values of the first four 
terms is shown in Table~\ref{table:k3_estimates}.
\renewcommand{\arraystretch}{1.5}

\begin{table}[h]
$$
\begin{array}{| c | c | c | c |}
\hline
m & \Delta_1(3, m) & a_m & \text{sign}\\ \hline
1 & (5-60a_1)z - 6a_1 & a_1=\frac{1}{12} & < 0\\ \hline
2 & (15a_2 - 1/2)z + 5a_2^2 +  11a_2 /2&  a_2=\frac{1}{30} & > 0\\ \hline
3& (17/90 - 5a_3/6)z  - a_3^2/2 - 31a_3/60 & a_3=\frac{17}{75} & < 0 \\ \hline
4& \big(119a_4/450  - 357/2500\big)z + 17a_4^2/90 +  1037a_4/4500 & a_4=\frac{27}{50} & > 0 \\ \hline
\end{array}
$$
\label{table:k3_estimates}
\caption{Solving the inequality with $k=3$}
\end{table}
Note that we obtain the value of $a_i$ from the coefficient of $z$. We have to verify that the sign of the remaining terms is what we require. If the sign is negative we get an upper bound for 
$f^*_3(z)/f_3(z)$; otherwise a lower bound. 

We have found two more lower bounds, from the
second and fourth convergent. From the second convergent, we get
\begin{equation}\label{lb2}
r_n> \frac{1/12}{n}\fplus\frac{1/30}{n}.
\end{equation}
This improves \eqref{lb1} and the lower bound given by
Nanjundiah \cite{Nanjundiah1959}: $$r_n> \frac{{1/12}}{n}\fplus \frac{n}{30n^2-1}.$$ 

\subsection*{Estimates with $k=5$} As $k$ becomes bigger, the inequality we need to solve has higher powers of $z$. Nevertheless, we can still find $a_1$, $a_2$, $\dots$, from the coefficient of the highest power. 
%As in earlier calculations, we need to verify that the rest of the terms have the required sign. 
For $k=5$, we find that
$$ \frac{f_5^*(z)}{f_5(z)}  = \frac{630z + 77}{ 7560z^2 + 1680z + 60}.$$
The expressions corresponding to $\Delta_1(5, m)$  
%$f_5^*(z)h_m(z)-h_m^*(z)f_3(z)$ 
are as follows. 
\begin{align*}
m=1: \; & \big(630-7560a_1\big)z^2 - (1680a_1- 77)z  - 60a_1 \\ %& a_1=\frac{1}{12} &
m=2: \; & \big(1890a_2 - 63\big)z^2   + \big(630a_2^2  + 924a_2  - 5\big)z + 77a_2^2 + 82a_2  \\
%&  a_2  =\frac{1}{30} & > 0\\ \hline
m=3:\; 
&  \big(53/2- 105a_3\big)z^2 -\big(63a_3^2  +1088a_3/15 - 2537/900\big)z  - 5a_3^2 - 31a_3/6
\\
%& a_3 =\frac{53}{210} & < 0 \\ \hline
m=4:\;
& \big(371a_4/10 - 39/2\big)z^2
+\big( 53a_4^2/2 +11777a_4/315   - 159/98\big)z
 \\
&\hspace{2.5in}
+ 2537a_4^2/900 + 4421a_4/1260 
\\
%& a_4=\frac{195}{371} & > 0 \\ \hline
\end{align*}
The values we obtain are:
$$a_1=\frac{630}{7560}= \frac{1}{12}; a_2=\frac{63}{1890}= \frac{1}{30}; 
a_3=\frac{53/2}{105}= \frac{53}{210}; a_4=\frac{39/2}{371/10}= \frac{195}{371}.$$

Now at each stage we have to verify that the remaining terms have the correct sign. For example, for $m=2$, on replacing $a_2$ by $1/30$, we find that
$$\big(630a_2^2  + 924a_2  - 5\big)z + 77a_2^2 + 82a_2 = 
53z/2 + 2537/900,$$
 which is positive for all positive $z$. In this manner, we obtain the lower bound
\begin{equation}\label{lb3}
r_n> \frac{1/12}{n}\fplus\frac{1/30}{n}\fplus\frac{53/210}{n}\fplus\frac{195/371}{n}.
\end{equation}
This improves the lower bound given by Popov \cite{Popov2018}, who showed
%$$r_n>\frac{1}{12n+a(n)}, \text{ where }
%a(n)=\frac{0.4-(0.1n+0.5)^{-2}}{n}.$$
%Popov's bound can be written as 
$$r_n>\frac{1/12}{n}\fplus\frac{1/30}{n}\bigg(1-\frac{1/4}{(n+0.5)^2}\bigg).$$
But \eqref{lb3} is better; this follows from the inequality
$$1-\frac{1/4}{(n+0.5)^2} > \frac{1}{1}\fplus\frac{53/210}{n^2+195/371}.$$

To summarize, we take successive convergents of the continued fraction for $\epsilon_p$, and find continued fractions of the form $g_m(p)$ such that $g_m(p)-g_m(p+1)$ give estimates for these convergents. The lower bound estimates telescope to give lower bounds of the form \eqref{rn_upper} for $r_n$ and improve upon those obtained by Cesaro, Robbins, Maria, Nanjundiah and Popov. Further lower bounds of the form \eqref{rn_upper} can be obtained from the values of $a_m$ reported in Table~\ref{table:am_small_valuesk}. 

\begin{table}[h]
$$
\begin{array}{| c | c | c | c | c | c | c |  }
\hline
k & a_1 & a_2 & a_3 & a_4 & a_ 5 & a_6  \\ \hline\hline
2 & 1/12 & 1/18 & 11/45 & 39/70 & 188/189 & 925/594 %& 321/143 
\\ \hline
3 & 1/12 &
1/30 &
17/75 &
27/50 &
44/45 &
305/198 %&
%1593/715 
\\ \hline
4 & 1/12 &
1/30 &
53/210 &
1377/2597 &
1198100/1192023 &
80881615183/52930560375 %&
%1475247196377882549/653646680686678250 
\\ \hline
5 & 1/12 &
1/30 &
53/210 &
195/371 &
56428/55809 &
248094749/163401381 %&
%23063679075387/10169441209774 
\\ \hline
6 & 1/12 &
1/30 &
53/210 &
195/371 &
22999/22737 &
329394523/217064562 %&
%39251065472847/17295567939089 
\\ \hline
7 & 1/12 &
1/30 &
53/210 &
195/371 &
22999/22737 &
29944523/19733142 %&
%711978825069/313717790503 
\\ \hline
\end{array}
$$
\caption{Values of $a_m$ for small values of $k$}
\label{table:am_small_valuesk}
\end{table}

\section{Summary of calculations}\label{sec:summary-calc}
The calculations in \S\ref{sec:lower-bounds} can be generalized.  Let $f^*_k(z)$, $f_k(z)$, $g_m(p)$ be as before. Recall from Proposition~\ref{prop:hz} that $g_m(p)-g_m(p+1)$ is a rational function in $z=p(p+1)$. We now suppress the dependence on $z$, and use 
$h_m^*$ and $h_m$ to denote the numerator and denominator of this difference. In terms of our earlier notation, we have
$$\frac{h_m^*}{h_m}:=g_m(p)-g_m(p+1)=\frac{n_m}{d_m}-\frac{n_m^+}{d_m^+}.$$
We also used $\Delta_1(k,m)$ which is defined as
$$\Delta_1(k,m):= f^*_kh_m -f_k h^*_m.$$
where we now use $f^*_k=f^*_k(z)$ and $f_k=f_k(z)$. 
This is obtained by clearing out the denominators in the inequality we wish to find, and bringing all terms to the left hand side. 

In our examples in \S\ref{sec:lower-bounds}, 
for a fixed $k\ge 2$, we considered in turn $\Delta_1(k,m)$ for $m=1, 2, 3, \dots.$
The following was observed.
For each $m$, we can find $a_m$ by taking the value that makes the coefficient of the highest degree term of $\Delta_1(k,m)$ equal to $0$. Further, in our examples, we observe the following.
\subsection*{Observations}
Let $a_m$ and $\Delta_1(k,m)$ be as above. Then, 
\begin{itemize}
\item the values $a_m>0$ for $m=1, 2, \dots$; and,
\item the sign of the remaining terms (once $a_m$ is chosen) is given by $(-1)^m$; that is, the direction of the inequality is pre-determined. 
\end{itemize}

Assuming this happens, it is clear that if we obtain a lower bound for $f_k(p)$ then it is also a lower bound for $\epsilon_p$ and thus gives a lower bound for $r_n$. 

There is one further useful observation from 
Table~\ref{table:am_small_valuesk}. The values of $a_m$ appear to stabilize below the diagonal; that is, for a fixed $m$, the values of $a_m$ obtained from $\Delta(k, m)$ are the same for  $k\ge m+1$. 

This observation is relevant to finding upper bounds for $r_n$, in addition to lower bounds, still under the assumption that the observations noted above are true. 
Because  if $f_k^*(2p+1)/f_k(2p+1)<g_m(p)-g_m(p+1)$ then the RHS is an upper bound for $\epsilon_p$ by taking $k\to \infty$ (see Proposition~\ref{prop:1}).

Thus assuming the above, it follows that upper (and lower) bounds for $\epsilon_p$ are given by  $g_m(p)-g_m(p+1)$ for $m$ odd (respectively, even). 
By telescoping, it follows that $r_n$ is bounded above  and below by $g_m(n)$ for $m$ odd and even, respectively, for $m=1, 2, 3, \dots.$ 
%This completes the proof. 

In the next section, we formalize the ``algorithm'' described above and prove that we can always find the $a_m$ in Theorem~\ref{thm:degrees}.

\begin{comment}
% From previous version
In the next section, we prove the following:
\begin{itemize}
\item We can always find $a_m$ as indicated above. This is the ``algorithm'' described above.
\item The $a_m$ so obtained are positive. 
\item The values of $a_m$ stabilize below the diagonal. We call this ``persistence'' of values. 
\end{itemize}

While the first two items are true for general polynomials of a suitable degree under some additional conditions, the third requires properties specific to the polynomials considered, namely a suitable three-term recurrence relation.
\end{comment}
We have been able to prove only some of our observations (the proofs are not included here). In particular, we have been unable to prove the sign changes in the second item of the observations. Nevertheless, for a specific $k$, we can compute $a_m$ using the algorithm;  in case it turns out that the direction of the inequalities are all correct, and if $m$ is even---we obtain lower bounds for $r_n$.  

%In this section, we prove these observations. This gives us an algorithm to calculate the continued fraction $g_m(p)$ for $m=1, 2, \dots$.

\section{The algorithm}\label{sec:algo}
%\section{Proofs of observations}\label{sec:algo}

\begin{comment}
%from version 2
In this section, we prove the aforementioned points. It is interesting, though perhaps not so surprising, that the proofs use many ideas from the theory of orthogonal polynomials---hypergeometric representations, the integral formula to compute  the associated (numerator) polynomials, and another continued fraction and its three-term recurrence relations.  

\end{comment}

In this section, 
we prove the algorithm used to calculate the continued fraction  $g_m(p)$ for $m=1, 2, \dots$. %The proof of these observations comes in the next section. 

Along with $\Delta_1(k,m)$ we will require three additional %related 
polynomials.  
\begin{align*}
\Delta_1(k,m) &:= f_k^*(d_m d_m^+)-f_k(n_m d_m^+- n_m^+ d_m) \\
\Delta_2(k,m) &:= f_k^*(p d_m d_{m-1}^+ + (p+1) d_m^+ d_{m-1})\\
&\hspace{1.cm} -f_k(p n_m d_{m-1}^+ - (p+1)n_m^+ d_{m-1}+(p+1) n_{m-1} d_m^+ - p n_{m-1}^+ d_m)\\
\Delta_3(k,m) &:= f_k^*(d_m d_{m-1}^+ - d_m^+ d_{m-1})\\
&\hspace{1cm}-f_k(n_m d_{m-1}^+ + n_m^+ d_{m-1}-n_{m-1}^+ d_m -n_{m-1} d_m^+) \\
\Delta_4(k,m) &:= \Delta_2(k,m)+\Delta_3(k,m).
\end{align*}
We have suppressed the dependence on $z$ and $a_1$, $a_2$, $\dots$, $a_m$  in our notation above.  These polynomials can be computed recursively. 

From the initial conditions for $n_m$ and $d_m$, we have the initial conditions
$$\Delta_1(k,-1) = 0,  \; \Delta_1(k,0) = f_k^*;  
\Delta_2(k,0) = -f_k; \; \Delta_3(k,0) = 2f_k;\;
\Delta_4(k,0)=f_k.
$$

\begin{Lemma}\label{lemma:delta} For fixed $k>1$, and for $m=1, 2, \dots$, 
 we have the following relations.
\begin{subequations}
\begin{align}
\Delta_1(k,m+1) &= z \Delta_1(k,m) + a_{m+1} \Delta_2(k,m) +a_{m+1}^2 \Delta_1(k,m-1)
\label{delta1}; \\
\Delta_2(k,m+1) &= (2z+1) \Delta_1(k,m) + a_{m+1} \Delta_2(k,m)+ a_{m+1} \Delta_3(k,m)
\label{delta2}; \\
\Delta_3(k,m+1) &= - \Delta_1(k,m) - a_{m+1} \Delta_3(k,m); \label{delta3}\\
\Delta_4(k,m+1) &= 2z \Delta_1(k,m)+a_{m+1}\Delta_2(k,m). \label{delta4}
\end{align}
\end{subequations}

\end{Lemma}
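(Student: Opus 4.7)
\medskip

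\noindent\textbf{Proof plan.} The idea is to substitute the three-term recurrence \eqref{g-n-d-recursion} directly into the level-$(m+1)$ definitions of the $\Delta_i$ and collect terms. Recall
$$n_{m+1}=p n_m+a_{m+1}n_{m-1}, \qquad d_{m+1}=pd_m+a_{m+1}d_{m-1},$$
and the companion identities $n_{m+1}^+=(p+1)n_m^+ +a_{m+1}n_{m-1}^+$ and $d_{m+1}^+=(p+1)d_m^+ +a_{m+1}d_{m-1}^+$ obtained from \eqref{g-n-d-recursion} under $p\mapsto p+1$. The only three algebraic facts we will exploit are $p(p+1)=z$, $p^2+(p+1)^2=2z+1$, and $(p+1)-p=1$. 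The initial conditions are immediate from $n_{-1}=1,d_{-1}=0,n_0=0,d_0=1$ and the definitions.

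For \eqref{delta1}, substituting into $d_{m+1}d_{m+1}^+$ produces $p(p+1)d_md_m^+ + a_{m+1}\bigl(pd_md_{m-1}^+ +(p+1)d_m^+d_{m-1}\bigr)+a_{m+1}^2 d_{m-1}d_{m-1}^+$, and an entirely analogous expansion of $n_{m+1}d_{m+1}^+-n_{m+1}^+d_{m+1}$ gives $p(p+1)(n_md_m^+-n_m^+d_m)+a_{m+1}\bigl(pn_md_{m-1}^+ -(p+1)n_m^+d_{m-1} +(p+1)n_{m-1}d_m^+-pn_{m-1}^+d_m\bigr)+a_{m+1}^2(n_{m-1}d_{m-1}^+-n_{m-1}^+d_{m-1})$. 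Using $p(p+1)=z$ and reading off the three brackets against the definitions of $\Delta_1(k,m),\Delta_2(k,m),\Delta_1(k,m-1)$ gives \eqref{delta1}. For \eqref{delta3}, expanding $d_{m+1}d_m^+-d_{m+1}^+d_m$ yields $-d_md_m^+ - a_{m+1}(d_md_{m-1}^+-d_m^+d_{m-1})$ by the telescoping $p-(p+1)=-1$, and the $n$-expansion collapses in the same way; comparing with $\Delta_1(k,m)$ and $\Delta_3(k,m)$ yields \eqref{delta3} directly.

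The calculation for \eqref{delta2} is the main point of interest. Expanding $pd_{m+1}d_m^+ +(p+1)d_{m+1}^+d_m$ produces $(p^2+(p+1)^2)d_md_m^+ + a_{m+1}\bigl(pd_{m-1}d_m^+ +(p+1)d_{m-1}^+d_m\bigr)$, and the analogous expansion of the $f_k$-bracket gives $(p^2+(p+1)^2)(n_md_m^+-n_m^+d_m)$ plus a corresponding $a_{m+1}$-term. Using $p^2+(p+1)^2=2z+1$ accounts for the $(2z+1)\Delta_1(k,m)$ contribution. The $a_{m+1}$-bracket, however, does \emph{not} coincide with $\Delta_2(k,m)$: the coefficients of $p$ and $p+1$ appear swapped relative to the definition. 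The key observation is that the discrepancy is exactly the $\Delta_3(k,m)$ expression; more precisely, term-by-term comparison using $(p+1)-p=1$ shows that the leftover $a_{m+1}$-bracket equals $\Delta_2(k,m)+\Delta_3(k,m)$, which yields \eqref{delta2}. This is also the reason $\Delta_3$ and $\Delta_4$ appear naturally.

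Finally, \eqref{delta4} is automatic: adding \eqref{delta2} and \eqref{delta3} and using $\Delta_4=\Delta_2+\Delta_3$ gives
$$\Delta_4(k,m+1)=(2z+1)\Delta_1(k,m)+a_{m+1}\Delta_2(k,m)+a_{m+1}\Delta_3(k,m)-\Delta_1(k,m)-a_{m+1}\Delta_3(k,m),$$
which simplifies to $2z\,\Delta_1(k,m)+a_{m+1}\Delta_2(k,m)$, as desired. The only real obstacle is careful bookkeeping in the $\Delta_2$ expansion, since one must notice that the mismatch between the expanded brackets and the $\Delta_2(k,m)$ template is exactly $\Delta_3(k,m)$; once this is spotted, every identity reduces to grouping the expansion by powers of $a_{m+1}$.
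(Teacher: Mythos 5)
Your proposal is correct and follows exactly the route the paper intends: the paper's own proof is only a one-line sketch ("similar to Lemma~\ref{prop:hz}, using relations of the same kind"), and your direct substitution of the recurrence \eqref{g-n-d-recursion} into the level-$(m+1)$ definitions, with the identities $p(p+1)=z$, $p^2+(p+1)^2=2z+1$, and the observation that the swapped $a_{m+1}$-bracket in the $\Delta_2$ expansion differs from the $\Delta_2(k,m)$ template by exactly $\Delta_3(k,m)$, is precisely the intended computation carried out in full. No gaps; your version is in fact more detailed than the paper's.
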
 
\begin{proof} The proof is similar to that of Lemma~\ref{prop:hz}. Some of the polynomials mentioned in Lemma~\ref{prop:hz} appear in the definitions of $\Delta_1(k,m)$, $\Delta_2(k,m)$ and $\Delta_3(k,m)$. Relations such as those given in the proof of
Lemma~\ref{prop:hz} are required for the proof. 
%The last equation \eqref{delta4} follows by adding \eqref{delta2} and \eqref{delta3}. 
%The lemma follows from the recurrence relation
%\eqref{g-n-d-recursion} satisfied by $n_m$ and $d_m$. 
\end{proof}

%\begin{Remark}  
%%\begin{enumerate}
%%\item In addition to $\Delta_1$, $\Delta_2$, $\Delta_3$, there is one additional polynomial that is useful. We define
%%$$\Delta_4(k,m) := \Delta_2(k,m)+\Delta_3(k,m).$$
%%Then $\Delta_4(k,0)=f_k$ and 
%%\begin{equation}
%%\Delta_4(k,m+1) = 2z \Delta_1(k,m)+a_{m+1}\Delta_2(k,m). \label{delta4}
%%\end{equation}
%%\item 
%Lemma~\ref{lemma:delta} makes it convenient to write a program to recursively compute 
%$\Delta_1(k,m)$ as a linear combination of $f_k^*$ and $f_k$. Write 
%$$\Delta_1(k,m) = \delta_1(m) \cdot   (f_k^*, f_k) $$
%where $ \delta_1(m)=(\delta_{11}(m), \delta_{12}(m))$ is a row vector and $\cdot$ indicates the dot product. (In our previous notation, $\delta_1(m)=(h(m),-h^*(m))$.)
%Similarly we define $\delta_{i}(m)$ for $i=2, 3, 4$. Then the recurrences can be written as  recurrences for vectors, and computed independently of $(f_k^*, f_k)$. 
%The initial conditions become
%$$\delta_1(-1) = (0,0),  \; \delta_1(0) = (1,0);  
%\delta_2(0) = (0,-1); \; \delta_3(0) = (0,2);\;
%\delta_4(0)=(0,1).$$
%The recurrence relations are straightforward. For example, \eqref{delta1} translates to
%$$\delta_1(m+1) = z \delta_1(m) + a_{m+1} \delta_2(m) +a_{m+1}^2 \delta_1(m-1).$$
%%\end{enumerate}
%\end{Remark}

Next, we show how to compute $a_m$, for $m=1, 2, 3, \dots$. First we note that degree (in $z$) of $f_k^*$ is one less than that of $f_k$. % (see \S\ref{sec:am-positive} for the details).

\begin{Theorem}\label{thm:degrees} Let $k\ge 2$ be fixed and let  $d_k>0$ denote the degree of the polynomial $f_k$. Suppose the degree of $f_k^*$ is $d_k-1$.  Then there is a unique choice of $a_1$, $a_2$,  $\dots$, $a_m$
which make the degree (in $z$) of the polynomials $\Delta_1(k,m)$, $\Delta_2(k,m)$, and, $\Delta_3(k,m)$  equal to
 $d_k-1$, $d_k$, and, $d_k$, respectively.
\end{Theorem}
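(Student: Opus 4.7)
The plan is to prove the theorem by induction on $m$. The base case $m=0$ is immediate from the stated initial conditions: $\Delta_1(k,0)=f_k^*$ has degree $d_k-1$ by hypothesis, while $\Delta_2(k,0)=-f_k$ and $\Delta_3(k,0)=2f_k$ have degree $d_k$. No $a_i$ needs to be chosen at this stage.

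For the inductive step, suppose $a_1,\dots,a_m$ have been uniquely determined and write $\alpha_m,\beta_m,\gamma_m$ for the leading coefficients of $\Delta_1(k,m),\Delta_2(k,m),\Delta_3(k,m)$ in their top degrees; all three are nonzero by the induction hypothesis. Extract the coefficient of $z^{d_k}$ from the recursion \eqref{delta1}. Since $a_{m+1}^2\Delta_1(k,m-1)$ has degree only $d_k-1$ and the other two summands contribute $(\alpha_m+a_{m+1}\beta_m)z^{d_k}$ at top order, the requirement $\deg\Delta_1(k,m+1)=d_k-1$ forces the unique value
$$a_{m+1}=-\frac{\alpha_m}{\beta_m},$$
which is well-defined (since $\beta_m\neq 0$) and nonzero (since $\alpha_m\neq 0$). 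Uniqueness of the whole sequence then follows immediately by iterating this argument.

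What remains is to verify that the induction hypothesis propagates, i.e.\ that $\Delta_2(k,m+1)$ and $\Delta_3(k,m+1)$ again have degree exactly $d_k$, and that $\Delta_1(k,m+1)$ truly has degree $d_k-1$ rather than dropping further. Reading off top-order coefficients from \eqref{delta2} and \eqref{delta3} gives $\beta_{m+1}=\alpha_m+a_{m+1}\gamma_m$ and $\gamma_{m+1}=-a_{m+1}\gamma_m$; the auxiliary recursion \eqref{delta4} yields the convenient identity $\beta_{m+1}+\gamma_{m+1}=\alpha_m$. The condition $\gamma_{m+1}\neq 0$ follows immediately from $a_{m+1}\neq 0$ and $\gamma_m\neq 0$.

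The main obstacles are the remaining two non\-vanishing conditions: first, $\beta_{m+1}=\alpha_m(\beta_m-\gamma_m)/\beta_m\neq 0$, which amounts to showing $\beta_m\neq\gamma_m$ throughout the induction; and second, that the new subleading coefficient $\alpha_{m+1}=[z^{d_k-1}]\Delta_1(k,m+1)$ is nonzero, which by \eqref{delta1} is an explicit combination of subleading coefficients of $\Delta_1(k,m),\Delta_2(k,m)$ and $a_{m+1}^2\alpha_{m-1}$. I expect these two points to carry the technical weight of the proof. The natural strategy is to track a suitable invariant through the coupled recursions of Lemma \ref{lemma:delta}, or alternatively to exploit the specific structure of $f_k$ and $f_k^*$ as Legendre-polynomial-related objects (with their hypergeometric representations) to rule out the degenerate cases where these leading coefficients could vanish.
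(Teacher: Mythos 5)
Your argument follows the same route as the paper's proof: induct on $m$, use \eqref{delta1} to read off the coefficient of $z^{d_k}$, and solve $\alpha_m+a_{m+1}\beta_m=0$ for the unique $a_{m+1}$. Your leading-coefficient bookkeeping ($\beta_{m+1}=\alpha_m+a_{m+1}\gamma_m$, $\gamma_{m+1}=-a_{m+1}\gamma_m$, $\beta_{m+1}+\gamma_{m+1}=\alpha_m$) is correct and is in fact more explicit than what the paper records. Of the two obstacles you flag, the first closes immediately with the invariant you suspect exists: your formulas give $\beta_{m+1}/\gamma_{m+1}=\beta_m/\gamma_m-1$, and since $\beta_0/\gamma_0=(-1)/2$ one gets $\beta_m/\gamma_m=-(2m+1)/2$, which is never $0$ and never $1$; hence $\beta_m\neq 0$ and $\beta_m\neq\gamma_m$ for all $m$ (granting $\gamma_m\neq 0$, which follows inductively from $a_i\neq 0$). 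The second obstacle --- that the new subleading coefficient $\alpha_{m+1}$ of $\Delta_1(k,m+1)$ does not vanish, so that $a_{m+2}$ can be chosen at the next stage --- is genuine, but it is precisely the point the printed proof also leaves open: Remark~\ref{remark:termination1} concedes that the degree of $\Delta_1(k,m)$ may drop below $d_k-1$, in which case the algorithm must terminate, and the verification that this does not occur for the specific Legendre-derived $f_k,f_k^*$ (via integral representations of the coefficients) is deferred to the arXiv version. So your proposal is as complete as the paper's own argument, and adding the $\beta_m/\gamma_m$ invariant would make it strictly more so.
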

\begin{proof} The proof follows by induction from the relations in Lemma~\ref{lemma:delta}. At each step, we use \eqref{delta1} in the form
\begin{equation}\label{def:am}
\Delta_1(k,m) = z \Delta_1(k,m-1) + a_{m} \Delta_2(k,m-1) +a_{m}^2 \Delta_1(k,m-2).
\end{equation}

For $m=1$, we require degree of $f_k$ given by $d_k$ is greater than $0$, and the degree of $f_k^*$ is $d_k-1$. 

By induction, after choosing $a_1$, $a_2$, $\dots$, $a_{m-1}$, the degree of the polynomials
$z \Delta_1(k,m-1)$ and $a_{m} \Delta_2(k,m-1)$ is $d_k$.
Now we can choose  $a_{m}$ in such a manner that the highest degree term of the first two terms is $0$, and so the degree of the left hand side becomes $d_k-1$ (see Remarks  \ref{remark:termination1} %and \ref{remark:termination}
for more details on this claim).  Note that the degree of $a_{m}^2 \Delta_1(k,m-2)$ is $d_k-1$ by induction, and thus does not influence the choice of $a_m$. 
\end{proof}
\begin{Remark}\label{remark:termination1} Lemma~\ref{lemma:delta} and Theorem~\ref{thm:degrees} remain true  if $f_k^*$ and $f_k$ are replaced by arbitrary polynomials of degree $d_k-1$ and $d_k$ respectively, with an important exception---while the choice of $a_m$ is unique at any stage to make the degree of $\Delta_1(k,m)$ to be $\leq d_k-1$, the resulting degree of $\Delta_1(k,m)$ could become $< d_k-1$. This would result in the situation of having no suitable choice of $a_{m+1}$ at the next stage. Thus we will need to terminate the algorithm should this situation arise.
\end{Remark}

\section{Discussion}\label{sec:conjectures}

We have only been partially successful in obtaining upper and lower bounds by our technique, because we have been unable to prove all our observations. In that respect, the situation is similar to the path taken by others mentioned in the introduction, who enhanced Robbins' proof, while using a Taylor expansion of \eqref{first}.  In this section we discuss our conjecture along with a 
further conjecture arising from our observation that the $a_m$ obtained by our technique appears to match those of the $S$-fraction giving the diagonal Pad\'e approximants to the Stirling series \eqref{stirling}.

%We begin with our earlier observation. 
%\begin{Conjecture}\label{conj1a} Let $a_m$ be as obtained from Theorem~\ref{thm:degrees}.
%Once $a_1, \dots, a_m$ are chosen, the sign of the remaining terms in $\Delta_1(k,m)$ is given by $(-1)^m$.
%\end{Conjecture}
%In view of Propositions~\ref{prop2} and \ref{prop:stability} this conjecture then implies the following. 

Based on numerical evidence partially described in \S\ref{sec:lower-bounds}, we have the following conjecture.
\begin{Conjecture}\label{conj1b} Let $g_m(n)$ be defined as in \eqref{gm}, with $a_m$ be as obtained from Theorem~\ref{thm:degrees}.
The $g_m(n)$ give lower bounds ($m$ even) and upper bounds (respectively, $m$ odd) for $r_n$, that is,
$$r_n \text{ is }
\begin{cases}
\le \;\; g_m(n), & m \text{ even};\\
\ge \;\; g_m(n), & m \text{ odd}.
\end{cases}
$$
\end{Conjecture}

From Table~\ref{table:am_small_valuesk}, we see that $g_m(p)$ is the continued fraction which begins
\begin{equation}\label{cf-asymp-expansion}
\frac{{1/12}}{n}\fplus \frac{{1/30}}{n}\fplus 
\frac{{53/210}}{n}\fplus \frac{{195/371}}{n}\fplus\frac{{22999/22737}}{n}\fplus\fdots.
\end{equation}

By following the path described in this paper, we expected to find upper and lower bounds for $r_n$. To our surprise, we found 
%$$g_k(p)=
%\frac{a_1}{p}\fplus\frac{a_2}{p}\fplus\frac{a_3}{p}\fplus\fdots\fplus\frac{a_{k-1}}{p}\fplus\frac{a_k}{p}.
%$$
that these terms match a continued fraction mentioned by Jones and Thron \cite[p.~350]{JT1980}, which is obtained from the asymptotic expansion  given in 
\eqref{stirling} by using the quotient-difference algorithm. This algorithm, as mentioned earlier, expresses the diagonal Pad\'e approximants in the form of a $S$-fraction.

%This continued fraction appears to be the same as the one in Jones and Thron~\cite[eq.\ (9.6.34), p.~351]{JT1980}. 
For the sake of notation, let us denote the continued fraction in 
\cite[p.~350]{JT1980} as $B(n)$ which is also of the form
\begin{equation}\label{sfraction-n}
	B(n)=\frac{b_1}{n} \fplus \frac{b_2}{n}\fplus\frac{b_3}{n }\fplus\fdots
\end{equation}

Thus we have the following conjecture. 
%{\color{red}(maybe we need to verify all the terms available)}
\begin{Conjecture}\label{conj2} Let $a_m$ be the sequence of numbers generated by Theorem~\ref{thm:degrees}. The continued fraction $g_m(n)$ (as $m\to \infty$) is the same as $B(n)$, i.e., the sequence of numerators  $a_m=b_m$. 
\end{Conjecture}
The following facts are known about $B(n)$. 
\begin{enumerate}
\item  The sequence $b_m$ appears in a Stieltjes-type continued fraction of the form  
\begin{equation}\label{sfraction}
	\frac{b_1}{x} \fplus \frac{b_2}{1}\fplus\frac{b_3}{x }\fplus\fdots
\end{equation}
corresponding to a solution of a classical Stieltjes moment problem. See \cite[Theorem~5.1.1]{CPVWJ2008} for the basis of this relationship and 
\cite[\S 12.2, p.~224-225]{CPVWJ2008} for specific remarks on $J(z)$ which is called the Binet Function in this reference. From this it is evident that
$b_m>0$, and \eqref{sfraction} converges to $J(\sqrt{z})/\sqrt{z}$ in $|\arg(z)|<\pi$.
\item The sequence $b_m$ is obtained by using the quotient-difference algorithm from
the asymptotic series for $J(z)$; see \cite[p.~227]{JT1980}.
\item By a simple transformation, we see that $J(n)$ has a convergent continued expansion of the form \eqref{sfraction-n}
%\begin{equation}\label{sfraction-n}
%	J(n)=\frac{b_1}{n} \fplus \frac{a_2}{n}\fplus\frac{a_3}{n }\fplus\fdots
%\end{equation} 
where $b_i>0$. The first few terms happen to be given by \eqref{cf-asymp-expansion}.
\item Thus, we conclude that the convergents of \eqref{sfraction-n} alternately provide upper and lower bounds for $r_n$. Thus, in particular, the lower bounds we have found in \S\ref{sec:lower-bounds} (as also the previous bounds of the authors mentioned) were, in principle, already available in this theory. 
\item At present, there is no closed form formula for $b_m$. 

\end{enumerate}

In view of the above, we see that Conjecture~\ref{conj2} implies Conjecture~\ref{conj1b}. In other words, assuming that our algorithm gives the same results as those of Pad\'e approximation of the Stirling series immediately leads to the conclusion that
Theorem~\ref{thm:degrees} provides a sequence of  upper and lower bounds of $r_n$. %The convergence of the continued fraction for $g_m(n)$ will then be automatically available from the general theory.  
However, a priori, there is no reason to expect that our technique applied to a particular problem will lead to a known continued fraction.

 \section{A Ramanujan tale}\label{sec:ramanujan}
We conclude this paper with a Ramanujan story, which motivates some of the questions asked and addressed in this paper.  The story has been told by Ranganathan~\cite[p.~81]{Ranganathan1967};
it is one of P.~C.~Mahalanobis' reminiscences.
 \begin{quote}
 On another occasion, I went to his room to have lunch with him. The First World War had started sometime earlier. I had in my hand a copy of the monthly {\em Strand Magazine} which at that time used to publish a number of puzzles to be solved by readers. Ramanujan was stirring something in a pan over the fire for our lunch. I was sitting near the table, turning over the pages of the {\em Magazine}. I got interested in a problem involving a relation between two numbers. I have forgotten the details; but I remember the type of the problem. Two British officers had been billeted in Paris in two different houses in a long street; the door numbers of these houses were related in a special way; the problem was to find the two numbers. It was not at all difficult. I got the solution in a few minutes by trial and error. 
 
 {\sc Mahalanobis}: (In a joking way), Now here is a problem for you. 
  
 {\sc Ramanujan}: What problem, tell me. (He went on stirring the pan).
 
 I read out the question from the {\em Strand magazine}.
 
{\sc Ramanujan}: Please take down the solution. (He dictated a continued fraction.)

The first term was the solution which I had obtained. Each successive term represented successive solutions for the same type of relation between two numbers, as the number of houses in the street would increase indefinitely. I was amazed.

{\sc Mahalanobis}: Did you get the solution in a flash? 

{\sc Ramanujan}: Immediately I heard the problem, it was clear that the solution was obviously a continued fraction; I then thought, ``Which continued fraction?" and the answer came to my mind. It was just as simple as this.
\end{quote}

In this paper, we give a technique to answer the question ``Which continued fraction?", where the continued fraction is of the form
$$\frac{a_1}{n}\fplus\frac{a_2}{n}\fplus\frac{a_3}{n}\fplus\fdots.$$
We applied this technique to a well-known problem and explored the requirements to apply this technique. Clearly, there is much that needs to be done and 
we expect to develop this technique further in later publications. 

\subsection*{Acknowledgements}  The proofs of some of our observations are available on
 \href{https://arxiv.org/abs/2204.00962v1}{arxiv:2204.00962v1}. We thank Alan Sokal and the anonymous referees for helpful advice.

%We used this to
%answer the following classical problem going back to Stirling (1730). %, which goes back to de Moivre and Stirling.
%\subsection*{Problem.}
%Let
%$\displaystyle n! = \sqrt{2\pi}n^{n+1/2}e^{-n}e^{r_n}.$
%What is 
%%Solve for 
%$r_n$?

%Please take down the solution.
%\begin{equation*}%\label{cf-asymp-expansion}
%\frac{{1/12}}{n}\fplus \frac{{1/30}}{n}\fplus 
%\frac{{53/210}}{n}\fplus \frac{{195/371}}{n}\fplus\frac{{22999/22737}}{n}\fplus\fdots.
%\end{equation*}
%
%{\color{red} Alternate convergents of this continued fraction give upper and lower bounds for 
%$r_n$. (To remove/modify in case we get convergence.)}
%

%\bibliography{references}{}
%\bibliographystyle{abbrv}
%\end{document}
%

\end{document}